\numberwithin{equation}{section}
\newtheorem{theorem}{Theorem}[section]
\newtheorem{lemma}[theorem]{Lemma}
\newtheorem{proposition}[theorem]{Proposition}
\theoremstyle{definition}
\newtheorem{remark}[theorem]{Remark}
\theoremstyle{remark}
\newcommand{\cD}{{\mathcal D}}
\newcommand{\cF}{{\mathcal F}}
\newcommand{\cG}{{\mathcal G}}
\newcommand{\cW}{{\mathcal W}}
\newcommand{\R}{{\mathbb R}}
\newcommand{\Z}{{\mathbb Z}}
\def\al{\alpha}
\def\sg{\sigma}
\def\om{\omega}
\def\6{\partial}
\def\8{\infty}
\def\l{\left}
\def\r{\right}
\def\ol{\overline}
\begin{document}
\title
[A characterization of two weight trace inequalities]
{A characterization of two weight trace inequalities for positive dyadic operators in the upper triangle case}
\author[H.~Tanaka]{Hitoshi Tanaka}
\address{Graduate School of Mathematical Sciences, The University of Tokyo, Tokyo, 153-8914, Japan}
\email{htanaka@ms.u-tokyo.ac.jp}
\thanks{
The author is supported by 
the FMSP program at Graduate School of Mathematical Sciences, the University of Tokyo, 
and Grant-in-Aid for Scientific Research (C) (No.~23540187), 
the Japan Society for the Promotion of Science. 
}
\subjclass[2010]{42B20, 42B35 (primary), 31C45, 46E35 (secondary).}
\keywords{
discrete Wolff's potential;
positive dyadic operator;
two weight trace inequality.
}
\date{}

\begin{abstract}
Two weight trace inequalities for positive dyadic operators 
are characterized in terms of discrete Wolff's potentials 
in the upper triangle case 
$1<q<p<\8$.
\end{abstract}

\maketitle

\section{Introduction}\label{sec1}
The purpose of this paper is to establish the two weight $T1$ theorem for 
positive dyadic operators in the upper triangle case 
$1<q<p<\8$.
We first fix some notations. 
We will denote $\cD$ by the family of all dyadic cubes 
$Q=2^{-i}(k+[0,1)^n)$, 
$i\in\Z,\,k\in\Z^n$. 
Let $\sg$ and $\om$ be nonnegative Radon measures on $\R^n$ and 
let $K:\cD\to[0,\8)$ be a map. 
For an $f\in L_{{\rm loc}}^1(d\sg)$ 
the positive dyadic operator $T_{K}[fd\sg]$ is defined by 
$$
T_{K}[fd\sg](x)
:=
\sum_{Q\in\cD}
K(Q)\int_{Q}f\,d\sg1_{Q}(x)
\quad x\in\R^n.
$$
We will denote by $\ol{K}_{\sg}(Q)(x)$ the function
$$
\ol{K}_{\sg}(Q)(x)
:=
\frac1{\sg(Q)}\sum_{Q'\subset Q}
K(Q')\sg(Q')1_{Q'}(x),
\quad x\in Q\in\cD,
$$
and $\ol{K}_{\sg}(Q)(x)=0$ when $\sg(Q)=0$. 
For $s>1$ discrete Wolff's potential of $\om$ 
$\cW^s_{K,\sg}[\om](x)$ 
is defined by 
$$
\cW^s_{K,\sg}[\om](x)
:=
\sum_{Q\in\cD}
K(Q)\sg(Q)
\l(\int_{Q}\ol{K}_{\sg}(Q)(y)\,d\om(y)\r)^{s-1}
1_{Q}(x),
\quad x\in\R^n.
$$
The pair $(K,\sg)$ is said to be satisfy 
the dyadic logarithmic bounded oscillation (DLBO) condition, 
if they fulfill 
$$
\sup_{x\in Q}\ol{K}_{\sg}(Q)(x)
\le A
\inf_{x\in Q}\ol{K}_{\sg}(Q)(x),
$$
where the constant $A$ does not depend on $Q\in\cD$. 
For each $1<p<\infty$, 
$p'$ will denote the dual exponent of $p$, 
i.e., $p'=\frac{p}{p-1}$. 

In their significant paper \cite{CaOrVe2}, 
Cascante, Ortega and Verbitsky established the following: 

\begin{proposition}[{\rm\cite[Theorem A]{CaOrVe2}}]\label{prp1.1}
Let $0<q<p<\8$ and $1<p<\8$. 
Suppose that the pair $(K,\sg)$ satisfy the {\rm DLBO} condition. 
Then two weight trace inequality 
\begin{equation}\label{1.1}
\|T_{K}[fd\sg]\|_{L^q(d\om)}
\le C_1
\|f\|_{L^p(d\sg)}
\end{equation}
holds if and only if 
$$
\|\cW^{p'}_{K,\sg}[\om]^{1/p'}\|_{L^r(d\om)}
\le C_2<\8,
\text{ where }
\frac1q=\frac1r+\frac1p.
$$
Moreover, 
the least possible $C_1$ and $C_2$ are equivalent.
\end{proposition}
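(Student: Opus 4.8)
The plan is to prove the two implications separately while tracking constants, so that the stated equivalence of the least admissible $C_1$ and $C_2$ drops out at the end. Throughout I will work with $q>1$ (the range relevant to this paper) and reduce \eqref{1.1}, by the usual duality for positive operators, to the adjoint estimate
$$
\|T_K[\phi\,d\om]\|_{L^{p'}(d\sg)}\le C_1\|\phi\|_{L^{q'}(d\om)},\qquad \phi\ge0;
$$
for $0<q\le1$ this $L^q$--$L^{q'}$ duality is unavailable and one must instead invoke the Sawyer-type duality for the upper triangle, with Hölder's inequality of exponent $(p/q)'$ replacing the maximal-function step below, which I would treat as a separate case. The engine of both directions is a \emph{discrete Wolff inequality}: under the {\rm DLBO} condition,
$$
\int_{\R^n}\l(T_K[\phi\,d\om]\r)^{p'}d\sg\approx\int_{\R^n}\cW^{p'}_{K,\sg}[\phi\,d\om]\,\phi\,d\om,\qquad\phi\ge0,
$$
with implied constants depending only on $p'$ and the {\rm DLBO} constant $A$. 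I would establish it by expanding the $p'$-th power of the defining sum and summing by parts along each chain of dyadic cubes; {\rm DLBO} enters precisely here, letting one replace $\sup_Q\ol K_\sg(Q)$ by $\inf_Q\ol K_\sg(Q)$ so that the partial sums telescope. Taking $\phi\equiv1$ recovers $\int(T_K[\om])^{p'}d\sg\approx\int\cW^{p'}_{K,\sg}[\om]\,d\om$, confirming that the Wolff potential of $\om$ is the natural object.

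For \textbf{sufficiency} I would, after the reduction above, estimate the right-hand side of the Wolff inequality using the pointwise bound
$$
\cW^{p'}_{K,\sg}[\phi\,d\om](x)\lesssim\l(M_\om\phi(x)\r)^{p'-1}\cW^{p'}_{K,\sg}[\om](x),
$$
where $M_\om$ denotes the dyadic maximal operator of $\om$ and the implied constant depends on $A$. This holds because {\rm DLBO} makes $\ol K_\sg(Q)$ essentially constant on $Q$, so that $\int_Q\ol K_\sg(Q)\phi\,d\om\lesssim M_\om\phi(x)\int_Q\ol K_\sg(Q)\,d\om$ for every $Q\ni x$, after which the $x$-dependent but $Q$-independent factor $M_\om\phi(x)$ is pulled out of the sum over $Q\ni x$. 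Combining this with $\phi\le M_\om\phi$ and Hölder's inequality of conjugate exponents $r/p'$ and $r/(r-p')$ yields
$$
\int_{\R^n}\cW^{p'}_{K,\sg}[\phi\,d\om]\,\phi\,d\om\lesssim\int_{\R^n}(M_\om\phi)^{p'}\cW^{p'}_{K,\sg}[\om]\,d\om\le\l\|\cW^{p'}_{K,\sg}[\om]^{1/p'}\r\|_{L^r(d\om)}^{p'}\|M_\om\phi\|_{L^{q'}(d\om)}^{p'}.
$$
The relation $\frac1q=\frac1r+\frac1p$ is exactly what forces the maximal function into $L^{q'}(d\om)$ (one checks $p'\cdot(r/(r-p'))=q'$), and since $q'>1$ the boundedness of $M_\om$ on $L^{q'}(d\om)$ closes the estimate with $C_1\lesssim C_2$.

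For \textbf{necessity} I would run the same chain in reverse, starting from the adjoint estimate and the \emph{lower} bound in the Wolff inequality, and test against $\phi=\l(\cW^{p'}_{K,\sg}[\om]\r)^{\gm}$ with $\gm$ determined by $\gm q'=r/p'$; note that, by $\frac1q=\frac1r+\frac1p$, this is equivalent to $\gm p'+1=r/p'$. A two-sided form of the pointwise bound then gives $\cW^{p'}_{K,\sg}[\phi\,d\om]\gtrsim\l(\cW^{p'}_{K,\sg}[\om]\r)^{\gm(p'-1)+1}$, so that, writing $I:=\int_{\R^n}\l(\cW^{p'}_{K,\sg}[\om]\r)^{r/p'}d\om$ (computed first on a truncation to guarantee finiteness, then exhausting $\R^n$), both sides of the tested inequality reduce to $I$: the estimate becomes $I\lesssim C_1^{p'}\,I^{p'/q'}$, and since $p'/q'<1$ (because $q<p$) this forces $I<\8$ together with $C_2\lesssim C_1$.

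I expect the main obstacle to be the $\gtrsim$ direction of the pointwise bound invoked in the necessity step: the upper bound is immediate, but the matching lower bound requires $\phi$ to be comparable to its own averages on the cubes that dominate the potential, which I would arrange through a stopping-time (principal cubes) decomposition adapted to $\cW^{p'}_{K,\sg}[\om]$ and {\rm DLBO}. The other substantial technical point is the proof of the discrete Wolff inequality itself under the sole hypothesis {\rm DLBO}, since it is this identity that reduces a genuinely nonlinear potential estimate to the linear Hölder and maximal-function arguments above.
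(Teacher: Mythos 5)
Your proposal addresses a statement the paper itself never proves: Proposition \ref{prp1.1} is quoted from \cite{CaOrVe2}, and the only piece re-proved in the paper is its necessity half, embedded in the proof of Theorem \ref{thm1.3} (following \cite{CaOrVe1}). Measured against that, your sufficiency argument is correct for $1<q<p$ and is the natural Cascante--Ortega--Verbitsky route: duality, the discrete Wolff inequality, the DLBO pointwise bound $\cW^{p'}_{K,\sg}[\phi\,d\om]\le A^{p'-1}(M_\om\phi)^{p'-1}\cW^{p'}_{K,\sg}[\om]$, then H\"{o}lder with exponents $r/p'$, $r/(r-p')$ and the $L^{q'}(d\om)$-boundedness of $M_\om$; your exponent arithmetic $p'r/(r-p')=q'$ checks out. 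Two misattributions, though. First, the discrete Wolff inequality $\int(T_K[\phi\,d\om])^{p'}d\sg\approx\int\cW^{p'}_{K,\sg}[\phi\,d\om]\,\phi\,d\om$ needs \emph{no} DLBO: it is exactly Lemma \ref{lem2.1} (that is, \cite[Theorem 2.1]{CaOrVe1}) with $s=p'$ and $\al_Q=K(Q)\sg(Q)\int_Q\phi\,d\om$, with constants depending only on $p'$; DLBO enters only in the pointwise bound, where you actually use it. Second, necessity needs no DLBO either, and the paper's own route to it is shorter and cleaner than yours: dualize \eqref{1.1}, apply Lemma \ref{lem2.1}, insert $(M_\om g)^{1/p'}$ into both factors, then use the $L^{q'/p'}(d\om)$-boundedness of $M_\om$ and duality in $L^{q'/p'}(d\om)$ (note $(q'/p')'=r/p'$); there is no test function, no truncation bookkeeping, and no self-improvement step.

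Concerning the step you flag as the main obstacle in your necessity argument: the lower bound is true, and it needs neither DLBO nor a stopping-time construction. Put $W:=\cW^{p'}_{K,\sg}[\om]$, $\lm_Q:=K(Q)\sg(Q)\bigl(\int_Q\ol{K}_\sg(Q)\,d\om\bigr)^{p'-1}$ and $S_Q:=\sum_{R\in\cD:\,R\supseteq Q}\lm_R$. Every $y\in Q$ lies in each such $R$, so $W\ge S_Q$ on $Q$, hence $\int_Q\ol{K}_\sg(Q)W^{\gm}\,d\om\ge S_Q^{\gm}\int_Q\ol{K}_\sg(Q)\,d\om$, and therefore
$$
\cW^{p'}_{K,\sg}[W^{\gm}d\om](x)
\ge\sum_{Q\ni x}\lm_Q S_Q^{\gm(p'-1)}
\ge\frac1{1+\gm(p'-1)}\Bigl(\sum_{Q\ni x}\lm_Q\Bigr)^{1+\gm(p'-1)}
=\frac{W(x)^{1+\gm(p'-1)}}{1+\gm(p'-1)},
$$
where the second inequality is precisely \eqref{2.1} applied along the chain of dyadic cubes containing $x$, for which the $S_Q$ are the tail sums. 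With this filled in, your bootstrap $I\lesssim C_1^{p'}I^{p'/q'}$, run on truncations and then exhausted, does yield $C_2\le C\,C_1$ with $C=C(p,q)$. What remains genuinely missing is the range $0<q\le1$: the proposition is stated for all $0<q<p$, while your proof relies on $L^q$--$L^{q'}$ duality and on $r>p'$, both of which fail for $q\le1$, and the separate case you promise is never carried out. As written, the proposal establishes the proposition only for $q>1$.
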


In his elegant paper \cite{Tr} 
Sergei Treil gives a simple proof of the following 
two weight $T1$ theorem for positive dyadic operators in the lower triangle case. 

\begin{proposition}[{\rm\cite[Theorem 2.1]{Tr}}]\label{prp1.2}
Let $1<p\le q<\8$. Then 
two weight trace inequality {\rm\eqref{1.1}} 
holds if and only if 
$$
\begin{cases}
\sup_{Q\in\cD}
\frac1{\sg(Q)^{1/p}}
\l(\int_{Q}\l(\sum_{Q'\subset Q}K(Q')\om(Q')1_{Q'}\r)^q\,d\om\r)^{1/q}
\le C_2<\8,
\\
\sup_{Q\in\cD}
\frac1{\om(Q)^{1/q'}}
\l(\int_{Q}\l(\sum_{Q'\subset Q}K(Q')\sg(Q')1_{Q'}\r)^{p'}\,d\sg\r)^{1/p'}
\le C_2<\8.
\end{cases}
$$
Moreover,
the least possible $C_1$ and $C_2$ are equivalent.
\end{proposition}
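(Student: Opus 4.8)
The plan is to pass to the bilinear (dual) formulation and prove the equivalence there, treating necessity by testing and sufficiency by a stopping-time decomposition in which the hypothesis $p\le q$ is decisive. Since $1<q<\8$, by duality \eqref{1.1} is equivalent to the bilinear estimate
$$\sum_{Q\in\cD}K(Q)\l(\int_Q f\,d\sg\r)\l(\int_Q g\,d\om\r)\le C_1\|f\|_{L^p(d\sg)}\|g\|_{L^{q'}(d\om)}$$
for all $f,g\ge0$, and by monotone convergence it suffices to treat finitely supported $f,g$ and a finite subfamily of $\cD$. Necessity is the routine direction: testing the bilinear form with $f=1_Q$ (and symmetrically, in the dual formulation, with $g=1_Q$), then discarding the nonnegative off-diagonal terms, isolates exactly the localized sums appearing inside the two conditions and yields $C_2\lesssim C_1$. (One should keep in mind that the stated single-measure form of the two conditions must be matched against the quantity that testing produces directly; this identification is harmless and I would record it at the outset.)

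For sufficiency I would linearize the $L^q(d\om)$ norm and build two principal-cube (stopping) families: $\cF$ for $f$ relative to $\sg$ and $\cG$ for $g$ relative to $\om$, each generated by the maximal descendants on which the relevant average doubles. This gives the corona property $\langle f\rangle^\sg_Q\le 2\langle f\rangle^\sg_{F}$ and $\langle g\rangle^\om_Q\le 2\langle g\rangle^\om_{G}$ whenever $F\in\cF$, $G\in\cG$ are the minimal principal cubes containing $Q$, together with the Carleson/sparseness bounds $\sum_{F\in\cF}(\langle f\rangle^\sg_F)^p\sg(F)\lesssim\|f\|_{L^p(d\sg)}^p$ and $\sum_{G\in\cG}(\langle g\rangle^\om_G)^{q'}\om(G)\lesssim\|g\|_{L^{q'}(d\om)}^{q'}$. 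Replacing each average by its frozen stopping value bounds the bilinear form by $4\sum_Q K(Q)\sg(Q)\om(Q)\langle f\rangle^\sg_{F(Q)}\langle g\rangle^\om_{G(Q)}$, which I would then split according to whether $F(Q)\subseteq G(Q)$ or $G(Q)\subsetneq F(Q)$.

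The two resulting sums are dual to one another, so I would treat only one, say $F(Q)\subseteq G(Q)$. Grouping the cubes beneath each principal cube and invoking the appropriate testing condition bounds the localized kernel sum $\sum_{Q\subseteq R}K(Q)\sg(Q)\om(Q)$ (or the corresponding $L^q(d\om)$/$L^{p'}(d\sg)$ quantity) by $C_2$ times $\sg(R)^{1/p}\om(R)^{1/q'}$. What remains is to sum the two frozen sequences $(\langle f\rangle^\sg_F)$ and $(\langle g\rangle^\om_G)$ against these weights via Hölder and the two Carleson embeddings. This final combination is exactly where the lower-triangle hypothesis enters: because $q\ge p$, equivalently $q'\le p'$, the embedding $\ell^p\hookrightarrow\ell^q$ applied to the sequence of localized $L^p(d\sg)$-masses lets one pass from two separately summable families to a single product dominated by $\|f\|_{L^p(d\sg)}\|g\|_{L^{q'}(d\om)}$.

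I expect the main obstacle to be precisely this sufficiency summation: organizing the double stopping-time bookkeeping so that each cube is charged to a single pair of principal cubes, verifying through the two testing conditions that the localized kernel sums are dominated by $C_2$, and combining the two Carleson sequences without overcounting. Positivity is what makes the scheme legitimate, since it licenses the truncation to finite families and the discarding of cross terms, while the restriction $p\le q$ is indispensable for the concluding $\ell^p\hookrightarrow\ell^q$ step; in the upper-triangle case $q<p$ this step fails and the simple testing characterization must be replaced by the Wolff-potential condition of Proposition \ref{prp1.1}. Tracking the constants through these steps gives $C_1\lesssim C_2$, which together with the easy necessity shows that the least admissible $C_1$ and $C_2$ are comparable.
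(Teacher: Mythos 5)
Your necessity direction is fine: it is the standard testing argument, and your instinct to ``record the identification at the outset'' is sound, since the conditions as printed in Proposition \ref{prp1.2} have $\sg(Q')$ and $\om(Q')$ interchanged relative to what testing actually produces (the correct quantities are $\int_Q(\sum_{Q'\subset Q}K(Q')\sg(Q')1_{Q'})^q\,d\om\le C_2^q\,\sg(Q)^{q/p}$ and its dual). The genuine gap is in the sufficiency. After constructing $\cF$ and $\cG$ you freeze \emph{both} averages, so the only way the hypotheses can subsequently enter your argument is through the joint quantity $\sum_{Q\subset R}K(Q)\sg(Q)\om(Q)\le C_2\,\sg(R)^{1/p}\om(R)^{1/q'}$. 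That condition is strictly weaker than the pair of testing conditions and does \emph{not} imply \eqref{1.1}. Concretely, take $p=q=2$ on $[0,1)$, $\sg$ Lebesgue measure, $\om=\delta_0$, and $K(I_k)=2^{k/2}$ for $I_k=[0,2^{-k})$ with $k$ even, $k\le N$, and $K=0$ on all other cubes: then $\sum_{Q'\subset I_j}K(Q')\sg(Q')\om(Q')\le 2\cdot 2^{-j/2}=2\,\sg(I_j)^{1/2}\om(I_j)^{1/2}$ uniformly in $N$, yet for $f=\sum_{j\le N}2^{j/2}1_{[2^{-j-1},2^{-j})}$ one computes $\|f\|_{L^2(d\sg)}\approx N^{1/2}$ while $T_K[f\,d\sg](0)\approx N$, so $C_1\gtrsim N^{1/2}\to\8$. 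The same example shows exactly where your final step breaks: in the case $F(Q)\subseteq G(Q)$ you must sum $\om(F)^{p'/q'}$ over all $F\in\cF$ sharing a common $G=\pi_{\cG}(F)$, and since the cubes of $\cF$ are nested (not disjoint) and form a Carleson family only with respect to $\sg$, nothing prevents $\sum_{F:\,\pi_{\cG}(F)=G}\om(F)\gg\om(G)$; in the example every spine cube $I_k\in\cF$ has $\pi_{\cG}(I_k)=[0,1)$ and $\om(I_k)=1$.

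The missing idea --- which is Hyt\"{o}nen's, and which you can read off from this paper's proof of Theorem \ref{thm1.3}, the only corona argument the paper actually carries out (Proposition \ref{prp1.2} itself is quoted from Treil, not proved here) --- is to freeze only the average attached to the \emph{larger} stopping parent and to keep the other function alive. In the case $G\subseteq F$ one bounds the group of cubes with $\pi_{\cF}(Q)=F$ by $\frac{2}{\sg(F)}\int_F f\,d\sg$ times $\int_F(\sum_{Q\subset F}K(Q)\sg(Q)1_Q)\,g_F\,d\om$, where $g_F$ equals $g$ on $E_{\cF}(F)$ and equals its $\om$-average on each starred stopping child $F'\in ch_{\cF}^*(F)$; this replacement is legitimate because each $Q$ in the group either contains or is disjoint from each such $F'$. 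H\"{o}lder then pairs the genuine $L^q(d\om)$ testing norm --- which your double-freezing never gets to use --- against $\|g_F\|_{L^{q'}(d\om)}$, and the failure of disjointness of $\cF$ is compensated by the quasi-orthogonality $\sum_F\|g_F\|_{L^{q'}(d\om)}^{q'}\lesssim\|g\|_{L^{q'}(d\om)}^{q'}$, proved from the disjointness of the sets $E_{\cF}(F)$ and the $M_{\om}$ bound exactly as in the paper's estimate of $I_{22}$. Your identification of the role of the hypothesis $p\le q$ does survive in this corrected form: it is the embedding $\ell^{q'}\hookrightarrow\ell^{p'}$ that lets one pass from the $q'$-sum of the $\|g_F\|_{L^{q'}(d\om)}$, which is what quasi-orthogonality controls, to the $p'$-sum that H\"{o}lder demands.
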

Proposition \ref{prp1.2} was first proved for $p=2$ in \cite{NaTrVo} 
by the Bellman function method. 
Later in \cite{LaSaUr} this was proved in full generality 
$1<p\le q<\8$. 
The checking condition in Proposition \ref{prp1.2} is called 
\lq\lq Sawyer type checking condition", 
since this was first introduced by Eric T. Sawyer 
in \cite{Sa1,Sa2}. 

In his excellent survey of the $A_2$ theorem \cite{Hy} 
Tuomas P. Hyt\"{o}nen introduces 
another proof of Proposition \ref{prp1.2}, 
which uses the \lq\lq parallel corona" decomposition 
from the recent work of 
Lacey, Sawyer, Shen and Uriarte-Tuero 
\cite{LaSaShUr} 
on the two weight boundedness of the Hilbert transform. 

Following Hyt\"{o}nen's arguments and 
applying a basic lemma due to \cite{CaOrVe1}, 
we shall establish the following 
two weight $T1$ theorem for positive dyadic operators in the upper triangle case. 

\begin{theorem}\label{thm1.3}
Let $1<q<p<\8$. Then 
two weight trace inequality {\rm\eqref{1.1}} 
holds if and only if 
$$
\begin{cases}
\|\cW^q_{K,\om}[\sg]^{1/q}\|_{L^r(d\sg)}
\le C_2<\8,
\\
\|\cW^{p'}_{K,\sg}[\om]^{1/p'}\|_{L^r(d\om)}
\le C_2<\8,
\\
\qquad\text{ where }
\frac1q=\frac1r+\frac1p.
\end{cases}
$$
Moreover,
the least possible $C_1$ and $C_2$ are equivalent.
\end{theorem}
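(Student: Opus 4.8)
The plan is to recast \eqref{1.1} as a symmetric bilinear estimate and then run a \emph{parallel corona} decomposition in the spirit of Hyt\"{o}nen's treatment of Proposition \ref{prp1.2}, replacing the Sawyer-type testing sums of the lower triangle case by discrete Wolff's potentials by means of the basic lemma of \cite{CaOrVe1}. First I would use the duality $(L^q(d\om))^*=L^{q'}(d\om)$ to see that \eqref{1.1} is equivalent to
$$
B(f,g):=\sum_{Q\in\cD}K(Q)\l(\int_Q f\,d\sg\r)\l(\int_Q g\,d\om\r)
\le C_1\|f\|_{L^p(d\sg)}\|g\|_{L^{q'}(d\om)},
\quad f,g\ge0.
$$
The form $B$ is symmetric under interchanging the triples $(f,\sg,p)$ and $(g,\om,q')$, so the forward inequality coincides with its dual $\|T_{K}[gd\om]\|_{L^{p'}(d\sg)}\le C_1\|g\|_{L^{q'}(d\om)}$; this symmetry is what will let the two conditions of Theorem \ref{thm1.3} play interchangeable roles.

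For the necessity of the second condition I would test the dual inequality against functions adapted to $\cW^{p'}_{K,\sg}[\om]$. Expanding the resulting nested sum and applying the discrete telescoping identity of \cite{CaOrVe1} (in the form $\l(\sum_Q c_Q\r)^{s}\approx s\sum_Q c_Q(\sum_{Q'\subseteq Q}c_{Q'})^{s-1}$ for $s>1$) recasts it as $\int(\cW^{p'}_{K,\sg}[\om])^{r/p'}\,d\om$, and the gap exponent $r$ with $\tfrac1r=\tfrac1q-\tfrac1p$ is exactly what upgrades the global bound to the asserted $L^r$ bound; crucially, no DLBO hypothesis is used here, as the statement of Theorem \ref{thm1.3} demands. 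The same argument applied to the forward inequality with $(\sg,p)$ and $(\om,q')$ interchanged yields the first condition, since the Cascante--Ortega--Verbitsky potential attached to the dual problem is precisely $\cW^{q}_{K,\om}[\sg]$ (note $(q')'=q$, and the gap exponent is again $r$).

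For sufficiency I would fix $f,g\ge0$ and build, in parallel, a $\sg$-stopping family $\cF$ adapted to $f$ and an $\om$-stopping family $\cG$ adapted to $g$, with averages roughly doubling along successive stopping cubes and with the Carleson packing bounds $\sum_{F'\subseteq F}\sg(F')\lesssim\sg(F)$ and $\sum_{G'\subseteq G}\om(G')\lesssim\om(G)$. Writing $\pi_\cF(Q),\pi_\cG(Q)$ for the minimal stopping cubes containing $Q$ and $a_F=\langle f\rangle^\sg_F$, $b_G=\langle g\rangle^\om_G$, the stopping property gives $\int_Q f\,d\sg\lesssim a_F\sg(Q)$ and $\int_Q g\,d\om\lesssim b_G\om(Q)$ on the corona of the pair $(F,G)$, so $B(f,g)$ is dominated by $\sum_{F,G}a_Fb_G\,w(F,G)$ with $w(F,G)=\sum K(Q)\sg(Q)\om(Q)$ taken over the common corona. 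A cube contributes only when $F,G$ are nested, so the sum splits into the diagonal $F=G$ and the two off-diagonal regions $G\subsetneq F$ and $F\subsetneq G$. Each region I would estimate by a three-fold H\"{o}lder inequality with the conjugate exponents $p,q',r$, whose conjugacy $\tfrac1p+\tfrac1{q'}+\tfrac1r=1$ is forced by $\tfrac1q=\tfrac1r+\tfrac1p$: the factors along $\cF$ and $\cG$ are summed by the packing bounds into $\|f\|_{L^p(d\sg)}$ and $\|g\|_{L^{q'}(d\om)}$, while the remaining $L^r$ factor is where the hypotheses enter.

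The hard part will be this last $L^r$ factor, i.e., the summation over principal cubes in the upper triangle range. Unlike the lower triangle case $p\le q$, there is no superadditivity ($\ell^p\hookrightarrow\ell^q$) to collapse the corona sums, so one must spend the gap exponent $r$ and convert the nested, nonlinear principal-cube sums into Wolff's potentials through the basic lemma of \cite{CaOrVe1}. Concretely, in the region $G\subsetneq F$ the $\sg$-average is frozen while the $\om$-mass is distributed over the deeper cubes, and the inner sums $\sum_{Q'\subseteq Q}K(Q')\sg(Q')\om(Q')$ collapse by definition to $\sg(Q)\int_Q\ol{K}_\sg(Q)\,d\om$, so that after the telescoping lemma this region is controlled by $\int(\cW^{p'}_{K,\sg}[\om])^{r/p'}\,d\om$, i.e., by the second condition; symmetrically $F\subsetneq G$ is controlled by the first condition through $\int(\cW^{q}_{K,\om}[\sg])^{r/q}\,d\sg$, and the diagonal by either. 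Matching each off-diagonal interaction to the correct potential, and checking that the frozen-average structure of the corona really does make the inner sums collapse to $\ol{K}_\sg(Q)$ and $\ol{K}_\om(Q)$, is the delicate bookkeeping on which the argument turns.
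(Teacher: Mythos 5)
Your high-level architecture is indeed the paper's: a parallel corona decomposition in Hyt\"{o}nen's style, Lemma \ref{lem2.1} to convert kernel sums into Wolff potentials, and a H\"{o}lder step resting on $\frac1p+\frac1{q'}+\frac1r=1$. Your necessity sketch is also consistent in outline with the paper's, which derives both conditions from \eqref{1.1} and its dual via Lemma \ref{lem2.1} (though you omit the actual mechanism: inserting $(M_\sg g)^{1/q}$ into the testing sum \eqref{2.4} and dualizing $L^{p/q}(d\sg)$ against $L^{r/q}(d\sg)$). The genuine problem is in the sufficiency plan, precisely at the step you yourself call ``the delicate bookkeeping on which the argument turns'': your matching of regions to conditions is backwards, and as written the plan is internally inconsistent. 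Once you freeze the $\sg$-average of $f$ in the region $G\subsetneq F$, the surviving quantity is $\sum_{Q}K(Q)\sg(Q)\int_Q g\,d\om$, and the only function left to pair with is $g$; H\"{o}lder in $L^q(d\om)\times L^{q'}(d\om)$ followed by Lemma \ref{lem2.1} (with measure $\om$ and exponent $q$) converts $\int_F\bigl(\sum_{Q\subset F}K(Q)\sg(Q)1_Q\bigr)^q\,d\om$ into $c(q)\int_F\cW^q_{K,\om}[\sg]\,d\sg$, and summing over $F$ with the $\sg$-Carleson property \eqref{2.3} of $\cF$ produces the \emph{first} condition $\|\cW^q_{K,\om}[\sg]^{1/q}\|_{L^r(d\sg)}$ --- not, as you claim, the second. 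Producing $\cW^{p'}_{K,\sg}[\om]$ requires the quantity $\int\bigl(\sum_Q K(Q)\om(Q)1_Q\bigr)^{p'}d\sg$, which can only arise from pairing against $f$ in $L^p(d\sg)\times L^{p'}(d\sg)$; that pairing is destroyed the moment you replace $\int_Qf\,d\sg$ by $a_F\sg(Q)$.

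Moreover, your reversed matching cannot be repaired by reorganizing the sum: to control the region $G\subsetneq F$ by the second condition you would have to index that region by $G\in\cG$, keep $f$ as a function, and then sum an $f$-factor of the form $\sum_{G}\langle f\rangle_{\pi_\cF(G)}^p\,\sg(G)$ (or $\sum_G\|f\|_{L^p(\cdot)}^p$ over corona pieces attached to $G$). This is not bounded by $C\|f\|_{L^p(d\sg)}^p$: distinct cubes $G\in\cG$ with the same stopping parent $\pi_\cF(G)=F$ are typically \emph{nested} (take a long chain of $\cG$-stopping cubes inside $F$ meeting no cube of $ch_{\cF}(F)$), so their $\sg$-masses overcount without bound, and the Carleson property of $\cG$ controls $\om$-measure, never $\sg$-measure. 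The paper's proof is organized exactly to respect this asymmetry: in its region $G\subset F$ it freezes only the $f$-average ($\langle f\rangle^\sg_Q\le2\langle f\rangle^\sg_F$ on the corona) and keeps $g$ as a function, replaced by $g_F$ (equal to $g$ on $E_\cF(F)$ and to $\langle g\rangle^\om_{F'}$ on $F'\in ch^*_\cF(F)$), so that the crux becomes the quasi-orthogonality $\sum_F\|g_F\|^{q'}_{L^{q'}(d\om)}\le C\|g\|^{q'}_{L^{q'}(d\om)}$, proved by the $ch^*_\cF$ reorganization; the region $F\subset G$ then yields the second condition by the symmetric argument. Your fully-frozen reduction $\sum_{F,G}a_Fb_G\,w(F,G)$ additionally discards this function-level structure, and while it could in principle be salvaged for the \emph{correct} matching (using the disjointness of the coronas attached to a fixed $F$), with your stated matching no choice of weights in the three-fold H\"{o}lder inequality makes all three factors summable.
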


\begin{remark}\label{rem1.4}
The {\rm DLBO} condition is essential and quite useful. 
In \cite{TaTe}, 
we develop a theory of weights for positive operators in a filtered measure space based upon this condition. 
\end{remark}

The letter $C$ will be used for constants
that may change from one occurrence to another.
Constants with subscripts, such as $C_1$, $C_2$, do not change
in different occurrences.

\section{Proof of Theorem \ref{thm1.3}}\label{sec2}
In what follows 
we shall prove Theorem \ref{thm1.3}. 
We need a basic lemma \cite[Theorem 2.1]{CaOrVe1}. 
For the sake of completeness, we will give the proof and 
will also check the constants. 

\begin{lemma}\label{lem2.1}
Let $\sg$ be a Radon measure on $\R^n$. 
Let $1<s<\8$ and 
$\{\al_{Q}\}_{Q\in\cD}\subset[0,\8)$.
Define, for $Q_0\in\cD$, 
\begin{alignat*}{2}
A_1
:=
\int_{Q_0}
\l(\sum_{Q\subset Q_0}\frac{\al_{Q}}{\sg(Q)}1_{Q}\r)^s
\,d\sg,
\\
A_2
:=
\sum_{Q\subset Q_0}
\al_{Q}\l(\frac1{\sg(Q)}\sum_{Q'\subset Q}\al_{Q'}\r)^{s-1},
\\
A_3
:=
\int_{Q_0}
\sup_{x\in Q\subset Q_0}
\l(\frac1{\sg(Q)}\sum_{Q'\subset Q}\al_{Q'}\r)^s
\,d\sg(x).
\end{alignat*}
Then 
$$
A_1\le c(s)A_2,\quad
A_2\le c(s)^{\frac1{s-1}}A_3
\text{ and }
A_3\le (s')^sA_1.
$$
Here, 
$$
c(s)
:=
\begin{cases}
s,\quad 1<s\le 2, 
\\
\l(s(s-1)\cdots(s-k)\r)^{\frac{s-1}{s-k-1}},
\quad 2<s<\8,
\end{cases}
$$
where $k=\lceil s-2 \rceil$ is 
the smallest integer greater than $s-2$. 
\end{lemma}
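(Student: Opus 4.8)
The three inequalities together make $A_1,A_2,A_3$ mutually comparable, so the plan is to run the cycle $A_3\lesssim A_1\lesssim A_2$ together with $A_2\lesssim A_3$, the Wolff-potential bound $A_1\le c(s)A_2$ being the crux. Throughout I write $c_Q:=\al_Q/\sg(Q)$, $f:=\sum_{Q\subset Q_0}c_Q1_Q$ (so that $A_1=\int_{Q_0}f^s\,d\sg$), $a(Q):=\frac1{\sg(Q)}\sum_{Q'\subset Q}\al_{Q'}$, the truncated potential $f_Q:=\sum_{Q'\subset Q}c_{Q'}1_{Q'}$, and $Mf(x):=\sup_{x\in Q\subset Q_0}a(Q)$ (so that $A_3=\int_{Q_0}(Mf)^s\,d\sg$). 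The two identities to keep in mind are $a(Q)=\frac1{\sg(Q)}\int_Q f_Q\,d\sg$ and $\sum_{Q'\subset Q}\al_{Q'}=\sg(Q)a(Q)$.

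For $A_3\le(s')^sA_1$ I would argue that, since $f_Q\le f$ pointwise, $a(Q)\le\frac1{\sg(Q)}\int_Q f\,d\sg$, so that $Mf$ is dominated by the dyadic maximal function $M_\sg^{\cD}f$ (taken over cubes in $Q_0$); the sharp bound $\|M_\sg^{\cD}f\|_{L^s(d\sg)}\le s'\|f\|_{L^s(d\sg)}$ then gives the estimate.

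The engine for $A_1\le c(s)A_2$ is the elementary inequality: for nonnegative $t_1,t_2,\dots$ and $p\ge1$, writing $T_j:=\sum_{k\ge j}t_k$ one has $T_1^p\le p\sum_j t_jT_j^{p-1}$ (from $T_j^p-T_{j+1}^p\le pt_jT_j^{p-1}$, telescoped). Fixing $x\in Q_0$ and running this over the chain of cubes containing $x$, where the tails $T_j$ are precisely the numbers $f_Q(x)$, yields after integration $A_1\le s\sum_{Q\subset Q_0}c_Q\int_Q f_Q^{s-1}\,d\sg$. With $k:=\lceil s-2\rceil$ I would iterate the same reduction $k$ further times, lowering the exponent by one at each step and accruing the factor $s(s-1)\cdots(s-k)$, until the exponent reaches $s-k-1\in(0,1]$, where Jensen's inequality (concavity) gives $\int_R f_R^{s-k-1}\,d\sg\le\sg(R)a(R)^{s-k-1}$. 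Collapsing the resulting nested chain sums and bounding the top $k$ factors by $h(Q)^k$, where $h(Q):=\sum_{Q\subset R\subset Q_0}c_R$, produces $A_1\le s(s-1)\cdots(s-k)\sum_Q\al_Q a(Q)^{s-k-1}h(Q)^k$.

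To close, I would apply Hölder with the conjugate exponents $\frac{s-1}{s-k-1}$ and $\frac{s-1}{k}$ to split off $A_2=\sum_Q\al_Q a(Q)^{s-1}$, reducing matters to the auxiliary sum $\sum_Q\al_Q h(Q)^{s-1}$. Since $h(Q)\le f(x)$ for every $x\in Q$, writing $\al_Q=\int_Q c_Q\,d\sg$ shows $\sum_Q\al_Q h(Q)^{s-1}\le\int_{Q_0}f^s\,d\sg=A_1$. This leaves the self-improving inequality $A_1\le s(s-1)\cdots(s-k)\,A_2^{(s-k-1)/(s-1)}A_1^{k/(s-1)}$, whose solution gives exactly $c(s)=(s(s-1)\cdots(s-k))^{(s-1)/(s-k-1)}$ (with $k=0$ recovering $c(s)=s$ for $1<s\le2$). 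Finally $A_2\le c(s)^{1/(s-1)}A_3$ follows almost for free: since $a(Q)\le Mf(x)$ for $x\in Q$ and $\sum_{Q\ni x}c_Q=f(x)$, one has $A_2=\int_{Q_0}\sum_{Q\ni x}c_Qa(Q)^{s-1}\,d\sg\le\int_{Q_0}(Mf)^{s-1}f\,d\sg\le A_3^{(s-1)/s}A_1^{1/s}$ by Hölder, and combining with $A_1\le c(s)A_2$ and rearranging yields the claim. The main obstacle is the bookkeeping in the iterated reduction: keeping the accrued constants and the nested chain sums aligned so that the Hölder exponents recombine precisely into $A_2$ and into the auxiliary sum dominated by $A_1$, thereby reproducing the stated $c(s)$ rather than a weaker constant.
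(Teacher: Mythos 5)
Your proposal is correct and follows essentially the same route as the paper's proof: the same telescoping inequality $(\sum_i a_i)^s\le s\sum_i a_i(\sum_{j\ge i}a_j)^{s-1}$ iterated $k+1$ times along the (linearly ordered) chains of dyadic cubes, Jensen's inequality at the final exponent $s-k-1\in(0,1]$, H\"older with exponents $\frac{s-1}{k}$ and $\frac{s-1}{s-k-1}$ leading to the same bootstrap $A_1\le s(s-1)\cdots(s-k)\,A_1^{k/(s-1)}A_2^{(s-k-1)/(s-1)}$, and identical arguments for $A_2\le c(s)^{1/(s-1)}A_3$ and $A_3\le (s')^sA_1$ (your bookkeeping via $h(Q)$ is only a cosmetic variant of the paper's keeping $f^k$ inside the integral). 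The one point you leave implicit is the preliminary reduction to finitely many nonzero $\alpha_Q$ (the paper's ``standard limiting argument''), which is what guarantees $A_1<\infty$ and licenses dividing by $A_1^{k/(s-1)}$ in the bootstrap.
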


\begin{proof}
By a standard limiting argument, 
we may assume without loss of generality that 
there is only a finite number of $\al_{Q}\ne 0$. 

\noindent{\bf (i)}\ \ 
We prove $A_1\le c(s)A_2$. 
We use an elementary inequality
\begin{equation}\label{2.1}
\l(\sum_ia_i\r)^s
\le s
\sum_ia_i\l(\sum_{j\ge i}a_j\r)^{s-1},
\end{equation}
where $\{a_i\}_{i\in\Z}$ is a sequence of summable nonnegative reals. 
First, we verify the simple case 
$1<s\le 2$.
It follows from \eqref{2.1} that
\begin{alignat*}{2}
A_1
&=
\int_{Q_0}
\l(\sum_{Q\subset Q_0}\frac{\al_{Q}}{\sg(Q)}1_{Q}\r)^s
\,d\sg
\\ &\le s
\sum_{Q\subset Q_0}
\frac{\al_{Q}}{\sg(Q)}
\int_{Q}
\l(\sum_{Q'\subset Q}\frac{\al_{Q'}}{\sg(Q')}1_{Q'}\r)^{s-1}
\,d\sg
\\ &\le s
\sum_{Q\subset Q_0}
\al_{Q}
\l(\frac1{\sg(Q)}\int_{Q}
\l(\sum_{Q'\subset Q}\frac{\al_{Q'}}{\sg(Q')}1_{Q'}\r)
\,d\sg\r)^{s-1}
\\ &=s
\sum_{Q\subset Q_0}
\al_{Q}
\l(\frac1{\sg(Q)}\sum_{Q'\subset Q}\al_{Q'}\r)^{s-1}
=s
A_2,
\end{alignat*}
where we have used $s-1\le 1$ and 
H\"{o}lder's inequality. 
Next, we prove the case $s>2$. 
Let $k=\lceil s-2 \rceil$ be 
the smallest integer greater than $s-2$. 
Applying \eqref{2.1} $(k+1)$-times, we have 
\begin{alignat*}{2}
A_1
&=
s(s-1)\cdots(s-k)
\\ &\times
\sum_{P_k\subset\cdots\subset P_1\subset P_0\subset Q_0}
\frac{\al_{P_0}}{\sg(P_0)}
\frac{\al_{P_1}}{\sg(P_1)}
\ldots
\frac{\al_{P_k}}{\sg(P_k)}
\int_{P_k}
\l(\sum_{P\subset P_k}\frac{\al_{P}}{\sg(P)}1_{P}\r)^{s-k-1}
\,d\sg.
\end{alignat*}
Since we have $0<s-k-1\le 1$, 
\begin{alignat*}{2}
\lefteqn{
\frac1{\sg(P_k)}\int_{P_k}
\l(\sum_{P\subset P_k}\frac{\al_{P}}{\sg(P)}1_{P}\r)^{s-k-1}
\,d\sg
}\\ &\le
\l(\frac1{\sg(P_k)}\sum_{P\subset P_k}\al_{P}\r)^{s-k-1}.
\end{alignat*}
These yield 
\begin{alignat*}{2}
A_1
&\le
s(s-1)\cdots(s-k)
\\ &\times
\int_{Q_0}
\l(\sum_{Q\subset Q_0}\frac{\al_{Q}}{\sg(Q)}1_{Q}\r)^k
\l(
\sum_{Q\subset Q_0}
\frac{\al_{Q}}{\sg(Q)}
\l(\frac1{\sg(Q)}\sum_{Q'\subset Q}\al_{Q'}\r)^{s-k-1}
1_{Q}
\r)
\,d\sg.
\end{alignat*}
H\"{o}lder's inequality with exponent 
$\frac{k}{s-1}+\frac{s-k-1}{s-1}=1$
gives 
\begin{alignat*}{2}
\lefteqn{
\sum_{Q\subset Q_0}
\frac{\al_{Q}}{\sg(Q)}
\l(\frac1{\sg(Q)}\sum_{Q'\subset Q}\al_{Q'}\r)^{s-k-1}
1_{Q}
}\\ &\le
\l(\sum_{Q\subset Q_0}\frac{\al_{Q}}{\sg(Q)}1_{Q}\r)^{\frac{k}{s-1}}
\l(
\sum_{Q\subset Q_0}
\frac{\al_{Q}}{\sg(Q)}
\l(\frac1{\sg(Q)}\sum_{Q'\subset Q}\al_{Q'}\r)^{s-1}
1_{Q}
\r)^{\frac{s-k-1}{s-1}},
\end{alignat*}
and, hence, 
\begin{alignat*}{2}
A_1
&\le 
s(s-1)\cdots(s-k)
\\ &\times
\int_{Q_0}
\l(\sum_{Q\subset Q_0}\frac{\al_{Q}}{\sg(Q)}1_{Q}\r)^{\frac{ks}{s-1}}
\l(
\sum_{Q\subset Q_0}
\frac{\al_{Q}}{\sg(Q)}
\l(\frac1{\sg(Q)}\sum_{Q'\subset Q}\al_{Q'}\r)^{s-1}
1_{Q}
\r)^{\frac{s-k-1}{s-1}}
\,d\sg.
\end{alignat*}
H\"{o}lder's inequality with the same exponent gives 
$$
A_1
\le 
s(s-1)\cdots(s-k)
A_1^{\frac{k}{s-1}}
A_2^{\frac{s-k-1}{s-1}}.
$$
Thus, we obtain 
$A_1\le c(s)A_2$.

\noindent{\bf (ii)}\ \ 
We prove $A_2\le c(s)^{\frac1{s-1}}A_3$. 
It follows that 
\begin{alignat*}{2}
A_2
&=
\int_{Q_0}
\sum_{Q\subset Q_0}
\frac{\al_{Q}}{\sg(Q)}
\l(\frac1{\sg(Q)}\sum_{Q'\subset Q}\al_{Q'}\r)^{s-1}
1_{Q}
\,d\sg
\\ &\le
\int_{Q_0}
\l(\sum_{Q\subset Q_0}
\frac{\al_{Q}}{\sg(Q)}1_{Q}(x)\r)
\l(
\sup_{x\in Q\subset Q_0}
\frac1{\sg(Q)}\sum_{Q'\subset Q}\al_{Q'}
\r)^{s-1}
\,d\sg(x).
\end{alignat*}
H\"{o}lder's inequality gives 
$$
A_2
\le
A_1^{\frac1s}
A_3^{\frac1{s'}}.
$$
Since we have had $A_1\le c(s)A_2$, 
we obtain 
$A_2\le c(s)^{\frac1{s-1}}A_3$.

\noindent{\bf (iii)}\ \ 
We prove $A_3\le (s')^sA_1$. 
It follows that 
\begin{alignat*}{2}
A_3
&=
\int_{Q_0}
\sup_{x\in Q\subset Q_0}
\l(\frac1{\sg(Q)}\sum_{Q'\subset Q}\al_{Q'}\r)^s
\,d\sg(x)
\\ &\le
\int_{Q_0}
M_{\sg}\l[\sum_{Q\subset Q_0}\frac{\al_{Q}}{\sg(Q)}1_{Q}\r](x)^s
\,d\sg(x)
\\ &\le (s')^s
A_1,
\end{alignat*}
where 
$M_{\sg}$ is the dyadic Hardy-Littlewood maximal operator and 
we have used the $L^s(d\sg)$-boundedness of $M_{\sg}$. 
This completes the proof. 
\end{proof}

\noindent{\bf Proof of Theorem \ref{thm1.3} (Sufficiency):}\ \ 
We follow the arguments due to Hyt\"{o}nen in \cite{Hy}. 
Let $Q_0\in\cD$ be taken large enough and be fixed. 
We shall estimate the quantity 
\begin{equation}\label{2.2}
\sum_{Q\subset Q_0}
K(Q)\int_{Q}f\,d\sg\int_{Q}g\,d\om,
\end{equation}
where 
$f\in L^p(d\sg)$ and $g\in L^{q'}(d\om)$ 
are nonnegative and are supported in $Q_0$. 

We define the collections of principal cubes 
$\cF$ for the pair $(f,\sg)$ and 
$\cG$ for the pair $(g,\om)$. 
Namely, analogously for $\cG$, 
$$
\cF:=\bigcup_{k=0}^{\8}\cF_k,
$$
where 
$\cF_0:=\{Q_0\}$,
$$
\cF_{k+1}:=\bigcup_{F\in\cF_k}ch_{\cF}(F)
$$
and $ch_{\cF}(F)$ is defined by 
the set of all maximal dyadic cubes $Q\subset F$ such that 
$$
\frac1{\sg(Q)}\int_{Q}f\,d\sg
>
\frac2{\sg(F)}\int_{F}f\,d\sg.
$$
Observe that
\begin{alignat*}{2}
\lefteqn{
\sum_{F'\in ch_{\cF}(F)}\sg(F')
}\\ &\le
\l(\frac2{\sg(F)}\int_{F}f\,d\sg\r)^{-1}
\sum_{F'\in ch_{\cF}(F)}
\int_{F'}f\,d\sg
\\ &\le
\l(\frac2{\sg(F)}\int_{F}f\,d\sg\r)^{-1}
\int_{F}f\,d\sg
=
\frac{\sg(F)}{2},
\end{alignat*}
and, hence, 
\begin{equation}\label{2.3}
\sg(E_{\cF}(F))
:=
\sg\l(F\setminus\bigcup_{F'\in ch_{\cF}(F)}F'\r)
\ge
\frac{\sg(F)}{2},
\end{equation}
where the sets $E_{\cF}(F)$ are pairwise disjoint. 

We further define the stopping parents, 
for $Q\in\cD$, 
$$
\begin{cases}
\pi_{\cF}(Q)
:=
\min\{F\supset Q:\,F\in\cF\},
\\
\pi_{\cG}(Q)
:=
\min\{G\supset Q:\,G\in\cG\},
\\
\pi(Q)
:=
(\pi_{\cF}(Q),\pi_{\cG}(Q)).
\end{cases}
$$
Then we can rewrite the series in \eqref{2.2} as follows:
\begin{alignat*}{2}
\sum_{Q\subset Q_0}
&=
\sum_{\substack{F\in\cF, \\ G\in\cG}}
\sum_{\substack{Q: \\ \pi(Q)=(F,G)}}
\\ &\le
\sum_{F\in\cF}\sum_{G\subset F}
\sum_{\substack{Q: \\ \pi(Q)=(F,G)}}
+
\sum_{G\in\cG}\sum_{F\subset G}
\sum_{\substack{Q: \\ \pi(Q)=(F,G)}},
\end{alignat*}
where we have used the fact that 
if $P,Q\in\cD$ then 
$P\cap Q\in\{P,Q,\emptyset\}$. 
Since the proof can be done in completely symmetric way, 
we shall concentrate ourselves on the first case only.

It follows that, for $F\in\cF$, 
\begin{alignat*}{2}
\lefteqn{
\sum_{G\subset F}
\sum_{\substack{Q: \\ \pi(Q)=(F,G)}}
K(Q)\int_{Q}f\,d\sg\int_{Q}g\,d\om
}\\ &=
\sum_{G\subset F}
\sum_{\substack{Q: \\ \pi(Q)=(F,G)}}
K(Q)\sg(Q)
\l(\frac1{\sg(Q)}\int_{Q}f\,d\sg\r)
\int_{Q}g\,d\om
\\ &\le
\frac2{\sg(F)}\int_{F}f\,d\sg
\sum_{G\subset F}
\sum_{\substack{Q: \\ \pi(Q)=(F,G)}}
K(Q)\sg(Q)\int_{Q}g\,d\om.
\end{alignat*}
We need the two observations. 
Suppose that 
$\pi(Q)=(F,G)$ and $G\subset F$. 
If $F'\in ch_{\cF}(F)$ satisfies $F'\subset Q$, 
then by definition of $\pi_{\cF}$ we must have 
$$
\pi_{\cF}\l(\pi_{\cG}(F')\r)=F.
$$
By this observation we define 
$$
ch_{\cF}^*(F)
:=
\l\{F'\in ch_{\cF}(F):\,
\pi_{\cF}\l(\pi_{\cG}(F')\r)=F\r\}.
$$
We further observe that, 
when $F'\in ch_{\cF}^*(F)$, 
we can regard $g$ as a constant on $F'$ in the above integrals. 
By these observations we see that, 
by use of H\"{o}lder's inequality, 
\begin{alignat*}{2}
\lefteqn{
\sum_{G\subset F}
\sum_{\substack{Q: \\ \pi(Q)=(F,G)}}
K(Q)\sg(Q)\int_{Q}g\,d\om
}\\ &\le
\l(\int_{F}\l(\sum_{Q\subset F}K(Q)\sg(Q)1_{Q}\r)^q\,d\om\r)^{1/q}
\\ &\quad\times
\l(
\int_{E_{\cF}(F)}g^{q'}\,d\om
+
\sum_{F'\in ch_{\cF}^*(F)}
\l(\frac1{\om(F')}\int_{F'}g\,d\om\r)^{q'}\om(F')
\r)^{1/q'}
\\ &=:
\l(\int_{F}\l(\sum_{Q\subset F}K(Q)\sg(Q)1_{Q}\r)^q\,d\om\r)^{1/q}
\|g_{F}\|_{L^{q'}(d\om)}.
\end{alignat*}
Thus, we obtain 
\begin{alignat*}{2}
\lefteqn{
\sum_{F\in\cF}\sum_{G\subset F}
\sum_{\substack{Q: \\ \pi(Q)=(F,G)}}
K(Q)\int_{Q}f\,d\sg\int_{Q}g\,d\om
}\\ &\le
\sum_{F\in\cF}
\frac2{\sg(F)}\int_{F}f\,d\sg
\l(\int_{F}\l(\sum_{Q\subset F}K(Q)\sg(Q)1_{Q}\r)^q\,d\om\r)^{1/q}
\|g_{F}\|_{L^{q'}(d\om)}
\\ &\le 2
\l(
\sum_{F\in\cF}
\l(\frac1{\sg(F)}\int_{F}f\,d\sg\r)^p
\sg(F)
\r)^{1/p}
\\ &\quad\times
\l[
\sum_{F\in\cF}
\l\{\frac1{\sg(F)^{1/p}}\l(\int_{F}\l(\sum_{Q\subset F}K(Q)\sg(Q)1_{Q}\r)^q\,d\om\r)^{1/q}\r\}^{p'}
\|g_{F}\|_{L^{q'}(d\om)}^{p'}
\r]^{1/p'}
\\ =:2
I_1\times I_2.
\end{alignat*}
For $I_1$, using 
$\sg(F)\le 2\sg(E_{\cF}(F))$, 
$$
\frac1{\sg(F)}\int_{F}f\,d\sg
\le
\inf_{y\in F}M_{\sg}f(y)
$$
and the disjointness of the $E_{\cF}(F)$,
we have 
\begin{alignat*}{2}
I_1
&\le 2^{1/p}
\l(
\sum_{F\in\cF}
\int_{E_{\cF}(F)}(M_{\sg}f)^p\,d\sg
\r)^{1/p}
\\ &\le 2^{1/p}
\l(\int_{Q_0}(M_{\sg}f)^p\,d\sg\r)^{1/p}
\le 2^{1/p}p'
\|f\|_{L^p(d\sg)}.
\end{alignat*}
Recall that $\frac1q=\frac1r+\frac1p$ 
and let $\theta:=\frac{q'}{p'}$. Then 
we have $\theta>1$ and 
${\theta'}{p'}=r$. 
It follows from H\"{o}lder's inequality 
with exponent $\theta$ that 
\begin{alignat*}{2}
I_2
&\le
\l[
\sum_{F\in\cF}
\l\{\frac1{\sg(F)^{1/p}}\l(\int_{F}\l(\sum_{Q\subset F}K(Q)\sg(Q)1_{Q}\r)^q\,d\om\r)^{1/q}\r\}^r
\r]^{1/r}
\\ &\quad\times
\l(
\sum_{F\in\cF}
\|g_{F}\|_{L^{q'}(d\om)}^{q'}
\r)^{1/q'}
\\ =:
I_{21}\times I_{22}.
\end{alignat*}
It follows by applying Lemma \ref{lem2.1} that 
\begin{alignat*}{2}
\lefteqn{
\int_{F}\l(\sum_{Q\subset F}K(Q)\sg(Q)1_{Q}\r)^q\,d\om
}\\ &\le c(q)
\sum_{Q\subset F}
K(Q)\sg(Q)\om(Q)
\l(\frac1{\om(Q)}\sum_{Q'\subset Q}K(Q')\sg(Q')\om(Q')\r)^{q-1}
\\ &=c(q)
\int_{F}
\sum_{Q\subset F}
K(Q)\om(Q)
\l(\int_{Q}\ol{K}_{\om}(Q)(y)\,d\sg(y)\r)^{q-1}
1_{Q}
\,d\sg.
\end{alignat*}
This implies 
\begin{alignat*}{2}
\lefteqn{
\l\{\frac1{\sg(F)^{1/p}}\l(\int_{F}\l(\sum_{Q\subset F}K(Q)\sg(Q)1_{Q}\r)^q\,d\om\r)^{1/q}\r\}^r
}\\ &\le c(q)^{r/q}
\l(
\frac1{\sg(F)}\int_{F}
\sum_{Q\subset F}
K(Q)\om(Q)
\l(\int_{Q}\ol{K}_{\om}(Q)(y)\,d\sg(y)\r)^{q-1}
1_{Q}
\,d\sg
\r)^{r/q}
\sg(F)
\\ &\le 2c(q)^{r/q}
\int_{E_{\cF}(F)}
\l(M_{\sg}\cW^q_{K,\om}[\sg]\r)^{r/q}
\,d\sg,
\end{alignat*}
and, hence, 
$$
I_{21}
\le 2^{1/q}c(q)^{1/q}(r/q)'
\|\cW^q_{K,\om}[\sg]^{1/q}\|_{L^r(d\sg)}.
$$
It remains to estimate $I_{22}$. 
it follows that 
$$
I_{22}^{q'}
=
\sum_{F\in\cF}
\int_{E_{\cF}(F)}g^{q'}\,d\om
+
\sum_{F\in\cF}
\sum_{F'\in ch_{\cF}^*(F)}
\l(\frac1{\om(F')}\int_{F'}g\,d\om\r)^{q'}\om(F').
$$
By the pairwise disjointness of the set $E_{\cF}(F)$, 
it is immediate that
$$
\sum_{F\in\cF}
\int_{E_{\cF}(F)}g^{q'}\,d\om
\le
\|g\|_{L^{q'}(d\om)}^{q'}.
$$
For the remaining double sum, 
we use the definition of 
$ch_{\cF}^*(F)$ 
to reorganize:
\begin{alignat*}{2}
\lefteqn{
\sum_{F\in\cF}
\sum_{F'\in ch_{\cF}^*(F)}
\l(\frac1{\om(F')}\int_{F'}g\,d\om\r)^{q'}\om(F')
}\\ &=
\sum_{F\in\cF}
\sum_{\substack{G\in\cG: \\ \pi_{\cF}(G)=F}}
\sum_{\substack{F'\in ch_{\cF}(F): \\ \pi_{\cG}(F')=G}}
\l(\frac1{\om(F')}\int_{F'}g\,d\om\r)^{q'}\om(F')
\\ &\le
\sum_{F\in\cF}
\sum_{\substack{G\in\cG: \\ \pi_{\cF}(G)=F}}
\l(\frac2{\om(G)}\int_{G}g\,d\om\r)^{q'}\om(G)
\\ &\le
\sum_{G\in\cG}
\l(\frac2{\om(G)}\int_{G}g\,d\om\r)^{q'}\om(G)
\\ &\le 2\cdot2^{q'}
\|M_{\om}g\|_{L^{q'}(d\om)}^{q'}
\le 2\cdot2^{q'}q^{q'}
\|g\|_{L^{q'}(d\om)}^{q'}.
\end{alignat*}
All together, we obtain 
$$
\sum_{Q\subset Q_0}
K(Q)\int_{Q}f\,d\sg\int_{Q}g\,d\om
\le C
\|\cW^q_{K,\om}[\sg]^{1/q}\|_{L^r(d\sg)}
\|f\|_{L^p(d\sg)}
\|g\|_{L^{Q'}(d\om)}.
$$
This yields the sufficiency of Theorem \ref{thm1.3}. 

\noindent{\bf Proof of Theorem \ref{thm1.3} (Necessity):}\ \ 
This fact was verified in \cite[Theorem B (i)]{CaOrVe1}. 
But, for reader's convenience the full proof is given here.
We assume that the trace inequality \eqref{1.1} holds. 
Then, by Lemma \ref{lem2.1}, there holds 
\begin{equation}\label{2.4}
\sum_{Q\in\cD}
K(Q)\om(Q)\int_{Q}f\,d\sg
\l(
\frac1{\om(Q)}\sum_{Q'\subset Q}
K(Q')\om(Q')\int_{Q'}f\,d\sg
\r)^{q-1}
\le C\,C_1^q
\|f\|_{L^p(d\sg)}^q,
\end{equation}
where $f\in L^p(d\sg)$ is nonnegative. 
For $g\ge 0$ we have 
\begin{alignat*}{2}
\lefteqn{
\int_{\R^n}g(x)\cW^q_{K,\om}[\sg](x)\,d\sg(x)
}\\ &=
\sum_{Q\in\cD}
K(Q)\om(Q)\int_{Q}g\,d\sg
\l(
\frac1{\om(Q)}\sum_{Q'\subset Q}
K(Q')\om(Q')\sg(Q')
\r)^{q-1}
\\ &=
\sum_{Q\in\cD}
K(Q)\om(Q)\sg(Q)
\l(\frac{\int_{Q}g\,d\sg}{\sg(Q)}\r)^{1/q}
\l(
\frac1{\om(Q)}
\l(\frac{\int_{Q}g\,d\sg}{\sg(Q)}\r)^{1/q}
\sum_{Q'\subset Q}
K(Q')\om(Q')\sg(Q')
\r)^{q-1}
\\ &\le
\sum_{Q\in\cD}
K(Q)\om(Q)\int_{Q}(M_{\sg}g)^{1/q}\,d\sg
\l(
\frac1{\om(Q)}\sum_{Q'\subset Q}
K(Q')\om(Q')\int_{Q'}(M_{\sg}g)^{1/q}\,d\sg
\r)^{q-1}
\\ &\le C\,C_1^q
\|(M_{\sg}g)^{1/q}\|_{L^p(d\sg)}^q
\\ &\le C\,C_1^q
\|g\|_{L^{p/q}(d\sg)},
\end{alignat*}
where we have used \eqref{2.4} and 
the $L^{p/q}(d\sg)$-boundedness of $M_{\sg}$. 
This implies by duality 
$$
\|\cW^q_{K,\om}[\sg]^{1/q}\|_{L^r(d\sg)}
\le C\,C_1<\8.
$$
To verify 
$$
\|\cW^{p'}_{K,\sg}[\om]^{1/p'}\|_{L^r(d\om)}
\le C\,C_1<\8,
$$
we merely use the dual inequality of \eqref{1.1}.

\end{document}